\newtheorem{prop}{Proposition}[section]
\newtheorem{cor}[prop]{Corollary}
\newtheorem{lemma}[prop]{Lemma}
\newtheorem{theorem}[prop]{Theorem}
\newtheorem{defi}[prop]{Definition}
\renewcommand{\geq}{\geqslant}
\def\leq{\leqslant}
\newcommand{\R}{\mathbb{R}}
\newcommand{\iot}{\int_{0}^{t}}
\newcommand{\cac}{\mathcal C}
\newcommand{\De}{\Delta}
\newcommand{\lp}{\left(}
\newcommand{\rp}{\right)}
\newcommand{\lc}{\left[}
\newcommand{\rc}{\right]}
\newcommand{\lln}{\left|}
\newcommand{\rrn}{\right|}
\def\cal{\mathcal}
\def\1{{\mathbf{1}}}
\def\1{{\mathbf{1}}}
\def\0.5{{\frac{1}{2}}}
\begin{document}

\title{Cross-variation of Young integral with respect to long-memory fractional Brownian motions}
\author{Ivan Nourdin\footnote{Facult\'e des Sciences, de la Technologie et de la Communication; UR en Math\'ematiques; Luxembourg University, 6 rue Richard Coudenhove-Kalergi, L-1359 Luxembourg; {\tt ivan.nourdin@uni.lu}} \ and Rola Zintout\footnote{Institut Elie Cartan, UMR 7502, Nancy Universit\'e - CNRS - INRIA; {\tt rola.zintout@univ-lorraine.fr}}}
\date{Universit\'e du Luxembourg and Universit\'e de Lorraine}

\maketitle

\noindent
{\bf Abstract}. We study the asymptotic behaviour of the cross-variation of two-dimensional processes having the form of a Young integral with respect to a fractional Brownian motion of index $H>\frac12$. When $H$ is smaller than or equal to $\frac34$, we show asymptotic mixed normality. When $H$ is stricly bigger than $\frac34$, we obtain a limit that is expressed in terms of the difference of two independent Rosenblatt processes.

%%%%%%%%%%%%%%%%%%%%%%%%%%%%%%%%%INTRODUCTION%%%%%%%%%%%%%%%%%%%%%%%%%%%%%%%%%
\section{Introduction}

\subsection{Foreword and main results}

In the near past, there have been many applications of stochastic differential equations (SDE) driven by fractional Brownian motion  in different areas of mathematical modelling. To name but a few, we mention the use of such equations as a model for meteorological phenomena \cite{6,26}, protein dynamics \cite{52,52bis}, or noise in electrical networks \cite{41}.

Here, we consider more generally a two-dimensional stochastic process
$
\{X_t\}_{t\in[0,T]}=\{(X^{(1)}_t,X^{(2)}_t)\}_{t\in [0,T]}
$
of the form
\begin{equation}\label{X}
X_t^{(i)}=x_i+\iot \sigma^{i,1}_sdB^{(1)}_s+\iot \sigma^{i,2}_sdB^{(2)}_s,\quad t\in[0,T],\,i=1,2.
\end{equation}
In (\ref{X}), $B=(B^{(1)},B^{(2)})$ is a two-dimensional fractional Brownian motion of Hurst index $H>\frac12$ defined on a complete probability space $(\Omega,\mathcal{F},P)$, whereas $x=(x_1,x_2)\in\R^2$ and $\sigma$
is a $2\times 2$ matrix-valued process.
The case where $X$ solves a fractional SDE corresponds to $\sigma_t=\sigma(X_t)$, with $\sigma:\R^2\to\mathcal{M}_2(\R)$ deterministic.
Since
we are assuming that $H>\frac12$, by imposing appropriate conditions on $\sigma$ (see Section \ref{framework} for the details) we may and will assume throughout the text that
$\iot \sigma^{i,j}_sdB^{(j)}_s$ is understood
in the Young \cite{young} sense (see again Section \ref{framework} for the details).

In this paper, we are concerned with the asymptotic behaviour of the cross-variation associated to $X$ on $[0,T]$, which is
the sequence of stochastic processes defined as:
\begin{eqnarray}\label{JN}
J_n(t)=\sum_{k=1}^{\lfloor nt\rfloor}\De X_{k/n}^{(1)} \De X_{k/n}^{(2)}, \mbox{   }\mbox{   }n\geq 1,\,t\in [0,T].
\end{eqnarray}
Here, and the same anywhere else, we use the notation $\De X^{(i)}_{k/n}$ to indicate the increment $X^{(i)}_{k/n}-X^{(i)}_{(k-1)/n}$.
We shall show the following two theorems. They might be of interest for solving problems arising from statistics, as  for instance the problem of
testing the hypothesis $(H_0)$: ``$\sigma^{1,2}=\sigma^{2,1}=0$'' in (\ref{X}).
\begin{theorem}\label{maintheorem1}
For any $t\in[0,T]$,
\begin{equation}
n^{2H-1}J_n(t) \mbox{  } \overset{\rm prob}{\to} \int_0^t  (\sigma^{1,1}_s\sigma^{1,2}_s+\sigma^{2,1}_s\sigma^{2,2}_s)ds\quad
\mbox{as $n\rightarrow\infty$}.
\end{equation}
\end{theorem}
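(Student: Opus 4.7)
The plan is to replace, in each increment $\Delta X^{(i)}_{k/n}$, the Young integral by its first-order Riemann-sum approximation, thereby reducing the problem to the asymptotics of quadratic functionals in the Gaussian increments $\Delta B^{(j)}_{k/n}$. Concretely, using the standard Young estimate $\big|\int_s^t(f_u-f_s)\,dg_u\big|\leq c\,\|f\|_\alpha\|g\|_\beta\,|t-s|^{\alpha+\beta}$, valid whenever $\alpha+\beta>1$, I will first write
\begin{equation*}
\Delta X^{(i)}_{k/n}=\sigma^{i,1}_{(k-1)/n}\Delta B^{(1)}_{k/n}+\sigma^{i,2}_{(k-1)/n}\Delta B^{(2)}_{k/n}+R^{(i)}_{k,n},
\end{equation*}
with $|R^{(i)}_{k,n}|\leq C_\omega\,n^{-(\alpha+\beta)}$, where $\alpha>1-H$ is a H\"older exponent for $\sigma^{i,j}$ (guaranteed by the assumptions of Section~\ref{framework}) and $\beta<H$ is taken arbitrarily close to $H$.

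Expanding $\Delta X^{(1)}_{k/n}\Delta X^{(2)}_{k/n}$ then produces four main quadratic-in-$\Delta B$ terms together with cross terms involving the remainders $R^{(i)}_{k,n}$. Using $|\Delta X^{(j)}_{k/n}|\leq C_\omega\,n^{-\beta}$, the full remainder contribution to $n^{2H-1}J_n(t)$ is of order $n^{2H-\alpha-2\beta}$, which tends to $0$ once $\beta$ is chosen close enough to $H$ so that the exponent becomes $-\alpha+\varepsilon<0$. The problem therefore reduces to the analysis of
\begin{equation*}
n^{2H-1}\sum_{k=1}^{\lfloor nt\rfloor}\sum_{j_1,j_2=1}^{2}\sigma^{1,j_1}_{(k-1)/n}\sigma^{2,j_2}_{(k-1)/n}\,\Delta B^{(j_1)}_{k/n}\Delta B^{(j_2)}_{k/n}.
\end{equation*}

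For the diagonal contributions $j_1=j_2=j$ I will split $(\Delta B^{(j)}_{k/n})^2=n^{-2H}+Y^{(j)}_{k,n}$ with $Y^{(j)}_{k,n}$ centred. The deterministic piece produces a Riemann sum of the a.s.\ continuous integrand $\sigma^{1,j}_{\cdot}\sigma^{2,j}_{\cdot}$, which converges almost surely to the corresponding time integral, and summing the two diagonal contributions delivers the claimed limit. The centred piece has variance of order $n^{-2}\sum_{k,\ell}\rho(k-\ell)^2$ with $\rho(m)\sim H(2H-1)|m|^{2H-2}$, which tends to $0$ in every regime $H\in(1/2,1)$ (logarithmically at $H=3/4$ and as $n^{4H-4}$ for $H>3/4$). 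The off-diagonal terms $j_1\ne j_2$ involve products $\Delta B^{(1)}_{k/n}\Delta B^{(2)}_{k/n}$ of independent centred Gaussians, and a direct $L^2$ bound based on the independence of $B^{(1)}$ and $B^{(2)}$ shows that these terms vanish.

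The main obstacle will be the $L^2$ control in the previous paragraph, because $\sigma_{(k-1)/n}$ itself depends on $B$ and cannot be pulled out of the expectation. My plan to circumvent this is to first truncate $\sigma$ to a bounded process (which is harmless for convergence in probability) and then to freeze $\sigma$ on a coarse mesh of size $1/m$ with $m=m(n)\to\infty$ growing more slowly than $n$; on each coarse block the frozen coefficient factors out of the $L^2$ estimate, and the error introduced by the freezing is absorbed by the a.s.\ H\"older modulus of $\sigma$.
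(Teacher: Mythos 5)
Your plan is correct in outline, but it takes a genuinely different route from the paper on the two substantive pieces. The initial reduction to frozen coefficients is common to both (you freeze at $(k-1)/n$ with a pathwise Young bound and a random constant $C_\omega$, which suffices for convergence in probability and is actually lighter on hypotheses; the paper freezes at $k/n$ and controls the error in $L^1(\Omega)$ via the moment assumption {\bf (B)} on $\|\sigma^{i,j}\|_\alpha$ combined with a Garsia--Rodemich--Rumsey bound on $|B|_\alpha$, see Lemma \ref{sigma1}). The divergence is in the weighted Gaussian sums: the paper never performs an $L^2$ computation involving the random weights $\sigma_{k/n}$. Its diagonal terms are handled almost surely by combining the classical a.s.\ quadratic-variation limit of fBm with the deterministic Lemma \ref{neuenkirch} (convergence of the discrete measures $n^{2H-1}\sum_k(\Delta B_{k/n})^2\delta_{k/n}$ to Lebesgue measure), and the cross term $S_n$ is dispatched by Proposition \ref{auxiliary} (the Corcuera--Nualart--Podolskij weighted-sum theorem), which shows that $a_nS_n$ converges in law with $a_n\gg n^{2H-1}$, so $n^{2H-1}S_n\to0$ in probability for free. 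You instead attack both pieces by direct $L^2$ bounds, which forces you to confront the $B$-measurability of $\sigma$; your truncation-plus-coarse-freezing fix is workable, but note that the frozen coefficient does not literally ``factor out'' of the expectation (it is still random): what saves you is the blockwise triangle inequality $\bigl\|\sum_ju_jG_j\bigr\|_{L^2}\leq M\sum_j\|G_j\|_{L^2}$ for bounded weights, which loses a factor of order $\sqrt{m}$ relative to orthogonality and closes only because $m=m(n)$ can be taken to grow slowly (e.g.\ $m=\sqrt n$); one must then verify the resulting bound in all three regimes, including the logarithm at $H=\frac34$ and the rate $n^{4H-4}$ for $H>\frac34$, which you correctly anticipate but do not execute. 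The paper's route buys a shorter argument that recycles machinery needed anyway for Theorem \ref{maintheorem2}; yours buys independence from the CNP theorem and weaker moment assumptions on $\sigma$, at the price of a localisation/blocking step that is only sketched. Finally, observe that both your expansion and the paper's Step 3 produce the limit $\int_0^t(\sigma^{1,1}_s\sigma^{2,1}_s+\sigma^{1,2}_s\sigma^{2,2}_s)\,ds$, so the indices in the displayed formula of the theorem appear to be transposed; your computation is consistent with the proof, not with the display.
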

%%%%%%%%%%%%%%%%%%%%%%%%
\begin{theorem}\label{maintheorem2}
Assume  $\sigma^{1,2}=\sigma^{2,1}=0$ and
let
\begin{equation}\label{an2}
a_n:=\left\{
\begin{array}{cll}
n^{2H-\frac12}&\quad&\mbox{if $\frac12<H<\frac34$}\\
\frac{n}{\sqrt{\log n}}&\quad&\mbox{if $H=\frac34$}\\
n&\quad&\mbox{if $\frac34<H<1$}
\end{array}
\right..
\end{equation}
Then, as $n\to\infty$,
\begin{equation}\label{convergence}
a_n
\, J_n \overset{\cal L}{\to} \int_0^\cdot \sigma ^{1,1}_s\sigma ^{2,2}_s dZ_s
\quad\mbox{in the Skorohod space $D[0,T]$}.
\end{equation}
In (\ref{convergence}), the definition of $Z$ is according to the value of $H$. More precisely,
$Z$ equals $\frac{C_H}2$ times $W$ when $H\in(\frac12, \frac34]$, with
$C_H$ given by (\ref{CH})-(\ref{CH2}) and $W$ a Brownian motion independent of $\mathcal{F}$;
and $Z=\frac12\big(R^{(1)}-R^{(2)}\big)$ when $H\in(\frac34,1)$, with $R^{(k)}$ the Rosenblatt process constructed from
the fractional Brownian motion
\[
\beta^{(k)}=\frac{1}{\sqrt{2}}(B^{(1)}+(-1)^{k+1}B^{(2)}), \quad k=1,2,
\]
see Definition \ref{rosenblatt} for the details.
\end{theorem}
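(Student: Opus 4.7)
\bigskip
\noindent
\textbf{Proof proposal (Theorem \ref{maintheorem2}).}

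\medskip
The plan is to reduce the cross-variation to a weighted quadratic variation with respect to two \emph{independent} fractional Brownian motions, and then to quote/adapt known limit theorems for such weighted variations. Under $(H_0)$ we have $X^{(1)}_t-x_1=\int_0^t\sigma^{1,1}_s dB^{(1)}_s$ and $X^{(2)}_t-x_2=\int_0^t\sigma^{2,2}_s dB^{(2)}_s$. Using the Young-integral estimate together with the (almost) $H$-H\"older regularity of $\sigma^{i,i}$, the increment $\Delta X^{(i)}_{k/n}$ coincides with $\sigma^{i,i}_{(k-1)/n}\Delta B^{(i)}_{k/n}$ up to a remainder of order $n^{-2H+\varepsilon}$ (uniformly in $k$). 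Summing $\lfloor nt\rfloor$ such remainders and multiplying by $a_n$ produces an error that tends to $0$ in probability for every admissible $H$, so it suffices to study the main term
\[
M_n(t)\;=\;\sum_{k=1}^{\lfloor nt\rfloor}\sigma^{1,1}_{(k-1)/n}\sigma^{2,2}_{(k-1)/n}\,\Delta B^{(1)}_{k/n}\Delta B^{(2)}_{k/n}.
\]

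The next step is the key algebraic identity: with $\beta^{(k)}=\tfrac1{\sqrt2}(B^{(1)}+(-1)^{k+1}B^{(2)})$ one has $\Delta B^{(1)}\Delta B^{(2)}=\tfrac12\bigl[(\Delta\beta^{(1)})^2-(\Delta\beta^{(2)})^2\bigr]$, and since $B^{(1)},B^{(2)}$ are independent fBm's, so are $\beta^{(1)},\beta^{(2)}$. Because $E[(\Delta\beta^{(k)}_{j/n})^2]=n^{-2H}$ cancels on subtraction, $M_n(t)=\tfrac12(V_n^{(1)}(t)-V_n^{(2)}(t))$ where
\[
V_n^{(k)}(t)\;=\;\sum_{j=1}^{\lfloor nt\rfloor}\sigma^{1,1}_{(j-1)/n}\sigma^{2,2}_{(j-1)/n}\,\bigl[(\Delta\beta^{(k)}_{j/n})^2-n^{-2H}\bigr],\quad k=1,2.
\]
Each $V_n^{(k)}$ is a \emph{weighted second-Hermite variation} of the fBm $\beta^{(k)}$, with random (but $\mathcal F$-measurable) weights.

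Now one invokes, separately for $k=1,2$, the known convergence of weighted Hermite-rank-$2$ variations of fBm. For $\tfrac12<H<\tfrac34$ one uses a fourth-moment/Malliavin argument (in the spirit of Nourdin–Nualart/Nourdin–Peccati) to show stable convergence $a_nV_n^{(k)}\Rightarrow C_H\int_0^\cdot\sigma^{1,1}_s\sigma^{2,2}_s\,dW_k(s)$ in $D[0,T]$, where $W_k$ is a standard Brownian motion independent of $\mathcal F$. The critical case $H=\tfrac34$ requires the extra $\sqrt{\log n}$ normalization and is handled by the same machinery with the variance constant computed by a truncated covariance estimate. For $\tfrac34<H<1$, the variance of $V_n^{(k)}(t)$ is of order $n^{4H-2}\cdot n^{-4H}=n^{-2}$ (so $a_n=n$), and the non-central limit theorem (Dobrushin–Major / Taqqu), combined with the approximation by Riemann sums against $\sigma^{1,1}\sigma^{2,2}$, gives $a_nV_n^{(k)}\Rightarrow\int_0^\cdot\sigma^{1,1}_s\sigma^{2,2}_s\,dR^{(k)}_s$. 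Because $\beta^{(1)}$ and $\beta^{(2)}$ are independent, the pairs $(a_nV_n^{(1)},a_nV_n^{(2)})$ converge jointly to independent limits; taking one half of the difference yields exactly the right-hand side of \eqref{convergence}, with the Brownian case producing $\tfrac12(W_1-W_2)\stackrel{\mathcal L}{=}\tfrac{1}{\sqrt 2}W$, which absorbs into the constant $C_H/2$.

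The principal obstacles I expect are (i) showing that the Young-integral remainder is negligible \emph{after} multiplication by $a_n$ in the critical regime $H=\tfrac34$, where the $\sqrt{\log n}$ factor leaves very little slack, and (ii) upgrading finite-dimensional stable convergence to functional convergence in $D[0,T]$: this requires a tightness estimate of Kolmogorov type for the weighted variations $V_n^{(k)}$, which typically follows from a moment bound of the form $E|V_n^{(k)}(t)-V_n^{(k)}(s)|^p\leq C|t-s|^{p/2}$ obtained via hypercontractivity on the second Wiener chaos. A subsidiary technical point is the \emph{joint} stable convergence of $(V_n^{(1)},V_n^{(2)})$, which relies on the independence of $\beta^{(1)},\beta^{(2)}$ together with the observation that the weights are $\mathcal F$-measurable and common to both terms.
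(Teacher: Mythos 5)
Your proposal is correct in its essentials and shares the paper's two key ideas: reduce $a_nJ_n$ to the weighted sum $\sum_k\sigma^{1,1}\sigma^{2,2}\,\Delta B^{(1)}_{k/n}\Delta B^{(2)}_{k/n}$ by killing the Young remainders, and use the rotation $\beta^{(k)}=\tfrac1{\sqrt2}(B^{(1)}+(-1)^{k+1}B^{(2)})$ to land on Breuer--Major (for $H\leq\frac34$) and Taqqu (for $H>\frac34$). Where you diverge is in how the weights are handled. The paper applies the Corcuera--Nualart--Podolskij theorem (Theorem \ref{thm-CNP}) \emph{once}, to the combined increments $\xi_{k,n}=a_n\Delta B^{(1)}_{k/n}\Delta B^{(2)}_{k/n}$: its hypotheses are only (i) stable f.d.d.\ convergence of the \emph{unweighted} sums, which follows from Corollary \ref{BM-cor} (Breuer--Major plus Peccati--Tudor) or Corollary \ref{taqqu-cor}, and (ii) a fourth-moment bound obtained by hypercontractivity in the second chaos; the conclusion is already functional convergence in $D[0,T]$. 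This sidesteps both of the ``principal obstacles'' you list: tightness comes for free from Theorem \ref{thm-CNP}, and no joint stable convergence of two weighted variations is ever needed. Your route --- weighting each rotated Hermite variation $V_n^{(k)}$ separately and then combining --- is viable, but note that the weight $\sigma^{1,1}\sigma^{2,2}$ is a general $\mathcal F$-measurable H\"older process depending on \emph{both} components of $B$, whereas the off-the-shelf weighted-variation theorems you invoke (Nourdin--Nualart--Tudor type) assume the weight is a functional of the single fBm being varied; closing that gap, plus the joint convergence of $(V_n^{(1)},V_n^{(2)})$, is precisely the content of the CNP result, so you would end up reproving it componentwise. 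Two smaller points: your worry about the critical case $H=\tfrac34$ is unfounded --- under assumption ({\bf A}) the remainder after multiplication by $a_n$ is $O(n^{H+\frac12-2\alpha})$, a genuine power saving with no reliance on the $\sqrt{\log n}$; on the other hand your claimed increment rate $n^{-2H+\varepsilon}$ overstates what ({\bf B}) provides (the weights are only $\alpha$-H\"older with $\alpha<H$, giving $n^{-2\alpha}$ as in Lemma \ref{sigma1}), though the correct rate still suffices. Finally, for $H>\tfrac34$ the fact that Taqqu's convergence in Theorem \ref{taqqu} holds in $L^2(\Omega)$ (not merely in law) is what makes the stable-convergence hypothesis verifiable; your sketch should make that explicit.
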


\subsection{Link to the existing literature}

Our results are close in spirit to those contained in \cite{CNW} (which has been a strong source of inspiration to us), where central limit theorems for power variations of integral fractional processes are investigated.

As we will see our analysis of $J_n$, that requires similar but different efforts compared to \cite{CNW} (as we are here dealing with a  {\it two-dimensional} fractional Brownian motion on one hand and we also consider\footnote{The authors of \cite{CNW} did not consider the case where $H>\frac34$ since, quoting them, ``the problem is more involved because non-central limit theorems are required''.} the case where $H>\frac34$ on the other hand), is actually greatly simplified
by the use of a recent, nice result obtained in \cite{CNP} about the asymptotic behaviour of weighted random sums.

\subsection{Plan of the paper}

The rest of the paper is as follows. Section \ref{framework}
contains a thorough description of the framework in which our study takes place (in particular, we recall the definition of the Young integral and we provide its main properties).  Section \ref{preliminaries} gathers several preliminary results that will be essential for proving our main results. Finally, proofs of Theorems \ref{maintheorem1} and \ref{maintheorem2} are given in Section \ref{proof}.

\section{Our framework}\label{framework}

In this section, we describe the framework used throughout the paper and we fix a parameter $\alpha\in (0,1)$.

We let $C^\alpha$ denote the set of H\"older continuous functions of index $\alpha \in (0,1)$, that is, the set of those functions
$f: [0,T]\rightarrow \R$ satisfying
\begin{equation}\label{normf}
\lln f\rrn_{\alpha }:= \sup_{0\leq s<t\leq T}\frac{\lln f(t)-f(s)\rrn}{(t-s)^{\alpha}}<\infty.
\end{equation}
Also, we set
$\| f\|_{\alpha }:=\lln f\rrn_{\alpha }+ \lln f\rrn_{\infty }$,
with $|f|_\infty=\sup_{0\leq t\leq T}|f(t)|$.

For a fixed $f\in C^\alpha$, we consider the operator $T_f: \cac^1\rightarrow  \cac^1$ defined as
\[
T_f(g)(t)=\iot f(u) g'(u)du, \quad t\in [0,T].
\]
Let $\gamma\in(0,1)$ be such that $\alpha +\gamma>1$.
Then $T_f$ extends, in a unique way, to an operator
$T_f:C^\gamma\to C^\gamma$, which further satisfies
\[
\| T_f(g)\|_\gamma \leq \left(1+C_{\alpha,\gamma}\right) (1+T^\gamma)\|f\|_\alpha \|g\|_\gamma,
\]
with $C_{\alpha,\gamma}=\frac12 \sum_{n=1}^\infty 2^{-n(\alpha +\gamma-1)}<\infty$.
See, e.g., \cite[Theorem 3.1]{IN} for a proof.

\begin{defi}
Let $\alpha,\gamma\in(0,1)$ be such that $\alpha+\gamma>1$.
Let $f\in C^\alpha$ and $g\in C^\gamma$. The {\it Young integral} $\int_0^.f(u) dg(u)$ is then defined as being $T_f(g)$.
\end{defi}
The Young integral satisfies (see, e.g., \cite[inequality (3.3)]{IN}) that, for any $a,b\in[0,T]$ with $a<b$,
\begin{eqnarray}\label{young1}
\lln \int_a^b (f(u) -f(a))dg(u) \rrn\leq C_{\alpha,\gamma} | f |_{\alpha} | g |_{\gamma}\lp b-a \rp^{\alpha+\gamma}.
\end{eqnarray}

As we said in the Introduction, we let $B=(B^{(1)},B^{(2)})$ be a 2-dimensional fractional Brownian motion defined on a probability space $(\Omega,\mathcal{F},P)$. We assume further that $\mathcal{F}$ is the $\sigma$-field generated by $B$.
We also suppose that the Hurst parameter $H$ of $B$ is the same for the two components and that it
is strictly bigger than $\frac{1}{2}$.

Let $\alpha\in(0,1)$ and let $\sigma ^{i,j}:\Omega\times [0,T]\rightarrow \R$, $i,j=1,2$, be four given stochastic processes that are measurable with respect to $\mathcal{F}$.
We will assume throughout the text that the following two additional assumptions on $\alpha$ and $\sigma^{i,j}$ take place:\\

{\bf (A)} $\alpha\in\big(\frac14+\frac H2,H\big)$,\\

{\bf (B)} For each pair $(i,j)\in\{1,2\}^2$, the random variable $\| \sigma^{i,j}\|_{\alpha}$ has moments of all orders.\\

Observe that $\alpha+H>1$ due to both ({\bf A}) and $H>\frac12$, so that the integrals in (\ref{X}) are well-defined in the Young sense.
Also, recall the following variant of the Garcia-Rodemich-Rumsey Lemma \cite{GRR}: for any $q>1$, there exists a constant $c_{\alpha,q}>0$ (depending only on $\alpha$ and $q$) such that
\begin{equation}\label{grr}
|B^{(i)}|_\alpha^{q}\leq c_{\alpha,q}\iint_{[0,T]^2}\frac{|B^{(i)}_u-B^{(i)}_v|^q}{|u-v|^{2+q\alpha}}dudv.
\end{equation}
Using (\ref{grr}), one deduces that $|B^{(i)}|_\alpha$ has
moments of all orders.

\section{Preliminaries}\label{preliminaries}

\subsection{Breuer-Major theorem}
The next statement is a direct consequence of the celebrated Breuer-Major \cite{breuermajor} theorem (see  \cite[Section 7.2]{IN} for a modern proof). We write `fdd' to indicate the convergence of all the finite-dimensional distributions.

\begin{theorem}[Breuer-Major]\label{BM-thm} Let $\beta$ be a (one-dimensional) fractional Brownian motion of index $H\in(0,\frac34]$. Then, as $n\to\infty$ and with $W$ a standard Brownian motion,
\begin{enumerate}
\item[(i)] if $H<\frac34$ then
\begin{eqnarray*}
&&\left\{\frac{1}{\sqrt{n}}\sum_{k=1}^{\lfloor n t\rfloor} \big[(\beta_{k}-\beta_{k-1})^2-1\big]\right\}_{t\in [0,T]}\\
&\overset{\rm fdd}{\longrightarrow}&\frac{1}{2}\sum_{k\in\mathbb{Z}} \big(
|k+1|^{2H}+|k-1|^{2H}-2|k|^{2H}
\big)^2\,
\{W_t\}_{t\in [0,T]};
\end{eqnarray*}
\item[(ii)] if $H=\frac34$ then
\begin{eqnarray*}
\left\{\frac{1}{\sqrt{n\log n}}\sum_{k=1}^{\lfloor n t\rfloor} \big[(\beta_{k}-\beta_{k-1})^2-1\big]\right\}_{t\in [0,T]}
\overset{\rm fdd}{\longrightarrow}\frac{3}{4}\log 2\,
\{W_t\}_{t\in [0,T]}.
\end{eqnarray*}
\end{enumerate}
\end{theorem}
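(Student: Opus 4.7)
The strategy is to recognize the sum as a Hermite-rank-$2$ functional of a stationary standard Gaussian sequence and invoke the classical Breuer--Major theorem.

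Setting $X_k:=\beta_k-\beta_{k-1}$, the sequence $\{X_k\}_{k\in\Z}$ is centered stationary Gaussian with unit variance and covariance
\[
r(k) := E[X_0 X_k] = \tfrac12\bigl(|k+1|^{2H}+|k-1|^{2H}-2|k|^{2H}\bigr),
\]
which, by a Taylor expansion of $u\mapsto|u|^{2H}$, satisfies $r(k) = H(2H-1)|k|^{2H-2}+O(|k|^{2H-4})$ as $|k|\to\infty$. Since $x^2-1=H_2(x)$ is the second Hermite polynomial, one has $(\beta_k-\beta_{k-1})^2-1=H_2(X_k)$, so the sum is a Hermite-rank-$2$ functional of $\{X_k\}$. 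All subsequent second-moment computations are governed by the Gaussian identity $E[H_2(X_j)H_2(X_k)]=2r(j-k)^2$.

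For case (i), $H<3/4$, the bound $r(k)^2=O(|k|^{4H-4})$ with $4H-4<-1$ yields $\sum_{k\in\Z}r(k)^2<\infty$, which is precisely the summability hypothesis of Breuer--Major. Applying the version in \cite[Section 7.2]{IN} gives the CLT for $\frac{1}{\sqrt{n}}\sum_{k=1}^{n}H_2(X_k)$ with asymptotic variance $\sigma^2:=2\sum_k r(k)^2 = \tfrac12\sum_k(|k+1|^{2H}+|k-1|^{2H}-2|k|^{2H})^2$, matching the stated constant. To upgrade to \textsc{fdd} convergence of the process indexed by $t$, I would apply the same CLT jointly to the increment sums over disjoint blocks $(\lfloor nt_{i-1}\rfloor,\lfloor nt_i\rfloor]$ and use Cram\'er--Wold together with the vanishing of cross-block covariances (again a consequence of $\sum_k r(k)^2<\infty$) to identify the limit as a Brownian motion.

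For case (ii), $H=3/4$, we have $r(k)^2\sim c|k|^{-1}$ so $\sum_k r(k)^2=\infty$ and the argument above breaks. I would instead invoke the log-normalized variant of Breuer--Major, whose asymptotic variance reduces via Ces\`aro summation to $\lim_n \frac{2}{\log n}\sum_{|k|\leq n}r(k)^2$; substituting the $H=3/4$ asymptotic $r(k)\sim \tfrac38|k|^{-1/2}$ and computing the logarithmic tail yields the stated constant $\tfrac34\log 2$. The main obstacle is precisely this boundary case: one must verify that the $\sqrt{n\log n}$ normalization captures the correct scale and that off-diagonal covariances remain negligible relative to it, both of which follow from a decomposition of $\sum_{j,k}r(j-k)^2$ into diagonal and tail contributions, after which the \textsc{fdd} upgrade proceeds as in case (i).
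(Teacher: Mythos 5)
The paper does not actually prove this statement: it quotes it as a direct consequence of the classical Breuer--Major theorem and points to \cite[Section~7.2]{IN}. Your route --- writing $(\beta_k-\beta_{k-1})^2-1=H_2(X_k)$ for the stationary unit-variance Gaussian sequence $X_k=\beta_k-\beta_{k-1}$, checking $\sum_k r(k)^2<\infty$ when $H<\tfrac34$ from $r(k)=O(|k|^{2H-2})$, and upgrading to fdd convergence by Cram\'er--Wold over disjoint blocks --- is exactly the standard derivation behind that citation, so in spirit you and the paper are doing the same thing, and part (i) is essentially fine modulo the remark at the end.

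The genuine problem is in part (ii), where you assert that ``computing the logarithmic tail yields the stated constant $\tfrac34\log 2$'' without carrying out the computation; if one carries it out exactly as you set it up, that constant does not appear. Indeed $r(k)\sim\tfrac38|k|^{-1/2}$ gives $\sum_{|k|\leq n}r(k)^2\sim 2\cdot\tfrac{9}{64}\log n=\tfrac{9}{32}\log n$, hence $\frac{2}{n\log n}\sum_{j,k\leq n}r(j-k)^2\to\tfrac{9}{16}$, so the normalized sum converges to a centred Gaussian of variance $\tfrac{9}{16}t$, i.e.\ to $\tfrac34\,W_t$, with no $\log 2$ anywhere. (A $\log 2$ typically enters only for dyadic partitions $k2^{-n}$, where $\log(2^n)=n\log 2$; that is not the setting here.) So you must either exhibit precisely where the $\log 2$ comes from or state explicitly that your computation does not reproduce the constant as written; claiming a match you have not verified is the gap. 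A related, smaller issue in part (i): you identify the stated constant with the asymptotic \emph{variance} $2\sum_k r(k)^2=\tfrac12\sum_k\big(|k+1|^{2H}+|k-1|^{2H}-2|k|^{2H}\big)^2$, but the theorem writes the limit as that constant \emph{times} $W_t$, whose variance at time $t$ is the square of the constant; the CLT you invoke actually produces $\sqrt{2\sum_k r(k)^2}\,W_t$. You should say which normalization your argument delivers rather than declaring agreement.
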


By a scaling argument (to pass from $k$ to $k/n$) and by using the seminal result of Peccati and Tudor \cite{PT} (to allow an extra $F$),
one immediately deduces from Theorem \ref{BM-thm} the following corollary.

\begin{cor}\label{BM-cor}
Let $\beta=(\beta^{(1)},\beta^{(2)})$ be a two-dimensional fractional Brownian motion of index $H\in(0,\frac34]$. Then, as $n\to\infty$ and with $W$ a (one-dimensional) standard Brownian motion independent of $\beta$, we have, for any random vector $F=(F_1,\ldots,F_d)$ measurable
with respect to $\beta$,
\begin{enumerate}
\item[(i)] if $H<\frac34$ then
\begin{eqnarray*}
&&\left\{F, n^{2H-\frac12}
\sum_{k=1}^{\lfloor n t\rfloor} \big[
(\beta^{(1)}_{k/n}-\beta^{(1)}_{(k-1)/n})^2-
(\beta^{(2)}_{k/n}-\beta^{(2)}_{(k-1)/n})^2\big]
\right\}_{t\in [0,T]}\\
&&\overset{\rm fdd}{\longrightarrow}
\left\{F,C_H\,
W_t\right\}_{t\in [0,T]},
\end{eqnarray*}
where
\begin{equation}\label{CH}
C_H=\frac{1}{\sqrt{2}}\sum_{k\in\mathbb{Z}} \big(
|k+1|^{2H}+|k-1|^{2H}-2|k|^{2H}
\big)^2
\end{equation}
\item[(ii)] if $H=\frac34$ then
\begin{eqnarray*}
&&\left\{F, \frac{n}{\sqrt{\log n}}
\sum_{k=1}^{\lfloor n t\rfloor} \big[
(\beta^{(1)}_{k/n}-\beta^{(1)}_{(k-1)/n})^2-
(\beta^{(2)}_{k/n}-\beta^{(2)}_{(k-1)/n})^2\big]
\right\}_{t\in [0,T]}\\
&&\overset{\rm fdd}{\longrightarrow}
\left\{F,C_{3/4}\,W_t\right\}_{t\in [0,T]},
\end{eqnarray*}
where
\begin{equation}\label{CH2}
C_{3/4}=\frac{3\sqrt{2}}{4}\log 2.
\end{equation}
\end{enumerate}
\end{cor}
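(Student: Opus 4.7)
The strategy is to reduce both parts of the statement to Theorem \ref{BM-thm} applied to $\beta^{(1)}$ and $\beta^{(2)}$ separately, using the self-similarity of fBm to convert $k/n$-increments into unit-time increments, the bivariate independence to combine the two one-dimensional limits, and the Peccati--Tudor result \cite{PT} to incorporate the extra random vector $F$.

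First I would exploit self-similarity: since $\{n^H\beta^{(i)}_{k/n}\}_{k\geq 0}$ has the same law as the integer-time process of a standard fBm of index $H$, I rewrite
\[
n^{2H-\frac12}\sum_{k=1}^{\lfloor nt\rfloor}\big[(\Delta\beta^{(1)}_{k/n})^2-(\Delta\beta^{(2)}_{k/n})^2\big]
=\frac{1}{\sqrt n}\sum_{k=1}^{\lfloor nt\rfloor}\Big(\big[n^{2H}(\Delta\beta^{(1)}_{k/n})^2-1\big]-\big[n^{2H}(\Delta\beta^{(2)}_{k/n})^2-1\big]\Big),
\]
and each of the two sums on the right has the same joint law (in $k$) as the one appearing in Theorem \ref{BM-thm}. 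Theorem \ref{BM-thm}(i) applied to each component, together with the independence of $\beta^{(1)}$ and $\beta^{(2)}$, yields fdd convergence of the pair to independent scaled Brownian motions $W^{(1)},W^{(2)}$. Since $W^{(1)}-W^{(2)}\overset{d}{=}\sqrt{2}\,W$ for a standard Brownian motion $W$, the limit of the difference is $C_H\,W_t$ with $C_H$ as in (\ref{CH}). The critical case $H=\frac34$ is strictly analogous via Theorem \ref{BM-thm}(ii), producing $C_{3/4}=\frac{3\sqrt 2}{4}\log 2$ as in (\ref{CH2}).

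It remains to include the random vector $F$ and to ensure that the Brownian limit is independent of $\mathcal F=\sigma(B)$. Here I would invoke the Peccati--Tudor criterion \cite{PT}: the partial sums above belong to the second Wiener chaos generated by $(\beta^{(1)},\beta^{(2)})$ and converge to a Gaussian limit, which is known to force stable (hence mixed) convergence with respect to any square-integrable functional of $\beta$; in particular, the limiting $W$ can be realized independent of $\mathcal F$, and thus of $F$. A standard Cram\'er--Wold reduction across finitely many time points $t_1<\ldots<t_m$ then upgrades the one-time convergence to the fdd statement jointly with $F$.

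The only genuinely non-routine step is the passage from the marginal CLT to joint convergence with $F$ and the independence of the limit from $\mathcal F$; this is exactly what is provided by \cite{PT}. All other ingredients -- self-similarity rescaling, bivariate combination via independence of $\beta^{(1)}$ and $\beta^{(2)}$, and the bookkeeping that leads to the constants $C_H$ and $C_{3/4}$ -- are mechanical consequences of Theorem \ref{BM-thm}.
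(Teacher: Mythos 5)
Your proposal is correct and follows essentially the same route as the paper, which disposes of this corollary in one line: a self-similarity/scaling argument to reduce to Theorem \ref{BM-thm}, independence of $\beta^{(1)}$ and $\beta^{(2)}$ to identify the limit of the difference as $\sqrt{2}$ times the one-dimensional limit (giving exactly (\ref{CH}) and (\ref{CH2})), and the Peccati--Tudor criterion to obtain joint convergence with $F$ and independence of the limiting $W$ from $\mathcal{F}$. Nothing to add.
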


\subsection{Taqqu's theorem and the Rosenblatt process}

Taqqu's theorem \cite{taqqu} describes the fluctuations
of the quadratic variation of the fractional Brownian motion
when the Hurst index $H$ is strictly bigger than $\frac34$, that is, for the range of values which are not covered by the Breuer-Major Theorem \ref{BM-thm}.
We state here a version that fits into our framework.
With respect to the original statement, it is worthwhile noting
that, in Theorem \ref{taqqu} (whose proof may be found in \cite{NNT}),  the convergence is in $L^2(\Omega)$ (and not only in law).
This latter fact will reveal to be crucial in our proof of Theorem \ref{maintheorem2}, as it will allow us to apply the main result of \cite{CNP} recalled in Section \ref{sec-CNP}.

\begin{theorem}[Taqqu]\label{taqqu} Let $\beta$ be a (one-dimensional) fractional Brownian motion of index $H\in(\frac34,1)$. Then, for any $t\in[0,T]$, the sequence
\begin{eqnarray}\label{ds}
n^{1-2H}\sum_{k=1}^{\lfloor n t\rfloor} \big[n^{2H}(\beta_{k/n}-\beta_{(k-1)/n})^2-1\big]
\end{eqnarray}
converges in $L^2(\Omega)$  as $n\to\infty$.
\end{theorem}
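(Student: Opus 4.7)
The plan is to cast the sum as a second-chaos random variable and exploit the Wiener--It\^o isometry. Let $\HH$ denote the canonical Hilbert space of $\beta$, and set $e_{n,k} := n^H \mathbf{1}_{[(k-1)/n, k/n]}\in\HH$, which has unit $\HH$-norm. Then $\xi_{n,k} := I_1(e_{n,k}) = n^H(\beta_{k/n}-\beta_{(k-1)/n})$ is a standard Gaussian, and, with $H_2(x) = x^2-1$ the second Hermite polynomial, the identity $H_2(I_1(e)) = I_2(e\otimes e)$ valid for unit-norm $e$ yields
$$
n^{2H}(\beta_{k/n}-\beta_{(k-1)/n})^2 - 1 = I_2(e_{n,k}\otimes e_{n,k}).
$$
The sequence (\ref{ds}) will thus equal $I_2(\phi_{n,t})$, with the symmetric kernel $\phi_{n,t} := n^{1-2H}\sum_{k=1}^{\lfloor nt\rfloor} e_{n,k}\otimes e_{n,k}\in\HH^{\otimes 2}$.

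Next, by the isometry $E[(I_2(f)-I_2(g))^2] = 2\|f-g\|_{\HH^{\otimes 2}}^{\,2}$ (for symmetric $f,g$), $L^2$-convergence of (\ref{ds}) will be equivalent to showing that $(\phi_{n,t})_n$ is Cauchy in $\HH^{\otimes 2}$. I would then use the factorization $\langle e\otimes e,f\otimes f\rangle_{\HH^{\otimes 2}} = \langle e,f\rangle_{\HH}^{\,2}$ together with the representation
$$
\langle \mathbf{1}_{[a,b]}, \mathbf{1}_{[c,d]}\rangle_{\HH} = H(2H-1)\int_a^b\!\int_c^d |u-v|^{2H-2}\,du\,dv
$$
to write
$$
\langle\phi_{n,t},\phi_{m,t}\rangle_{\HH^{\otimes 2}} = (nm)^{1-2H}\sum_{k=1}^{\lfloor nt\rfloor}\sum_{l=1}^{\lfloor mt\rfloor}\langle e_{n,k},e_{m,l}\rangle_{\HH}^{\,2},
$$
which, after rescaling $k\mapsto k/n$ and $l\mapsto l/m$, will be a Riemann-type sum for $H^2(2H-1)^2\int_0^t\!\int_0^t |u-v|^{4H-4}du\,dv$.

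The crucial observation to exploit is that $4H-4 > -1$ precisely when $H > 3/4$, so the limiting integral is finite. Once the Riemann-sum convergence is established, the same argument will give the same limit for $\|\phi_{n,t}\|^2$, $\|\phi_{m,t}\|^2$, and $\langle\phi_{n,t},\phi_{m,t}\rangle_{\HH^{\otimes 2}}$, whence
$$
\|\phi_{n,t}-\phi_{m,t}\|_{\HH^{\otimes 2}}^{\,2} = \|\phi_{n,t}\|^2 + \|\phi_{m,t}\|^2 - 2\langle\phi_{n,t},\phi_{m,t}\rangle \longrightarrow 0
$$
as $n,m\to\infty$, giving the Cauchy property and hence the desired $L^2$-convergence.

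The main obstacle will be the diagonal regime $\{k/n\approx l/m\}$, where the singularity of the kernel $|u-v|^{2H-2}$ prevents a direct pointwise Riemann approximation of $\langle e_{n,k},e_{m,l}\rangle_{\HH}$. My strategy would be a truncation argument: split the double sum at a threshold $\delta > 0$. On $\{|k/n-l/m|>\delta\}$ the integrand is bounded and dominated convergence applies. On $\{|k/n-l/m|\leq\delta\}$, a uniform-in-$n,m$ crude bound combined with $\iint_{|u-v|\leq\delta}|u-v|^{4H-4}du\,dv = O(\delta^{4H-2})\to 0$ as $\delta\to 0^+$ will control the contribution. Letting first $n,m\to\infty$ and then $\delta\to 0$ will yield the required convergence; the resulting $L^2$-limit is an element of the second chaos of $\beta$, namely a Rosenblatt-type random variable at time $t$.
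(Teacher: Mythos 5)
The paper does not actually prove this statement: it is quoted as Taqqu's theorem and the proof is explicitly deferred to the reference \cite{NNT}, so there is no internal proof to compare against. Your argument is, however, essentially the standard proof of the $L^2$ version of Taqqu's theorem (and it is the route taken in that reference): represent the normalized quadratic variation as a double Wiener--It\^o integral $I_2(\phi_{n,t})$ and show that the kernels $\phi_{n,t}$ converge in $\HH^{\otimes 2}$, the finiteness of $\iint_{[0,t]^2}|u-v|^{4H-4}\,du\,dv$ for $H>\frac34$ being exactly what makes the limit exist. The identities you invoke ($H_2(I_1(e))=I_2(e\otimes e)$ for $\|e\|_{\HH}=1$, the factorization $\langle e\otimes e,f\otimes f\rangle=\langle e,f\rangle^2$, the covariance representation of $\HH$ for $H>\frac12$) are all correct, and the scaling computation does turn $\langle\phi_{n,t},\phi_{m,t}\rangle_{\HH^{\otimes 2}}$ into $nm\,H^2(2H-1)^2\sum_{k,l}a_{k,l}^2$ with $a_{k,l}=\iint_{R_{k,l}}|u-v|^{2H-2}du\,dv$ over rectangles $R_{k,l}$ of area $(nm)^{-1}$, i.e.\ a Riemann-type sum for the stated integral.

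One step you should make explicit, because the most naive ``crude bound'' fails: near the diagonal, the trivial estimate $\langle e_{n,k},e_{m,l}\rangle^2\leq 1$ only gives a contribution of order $\delta\,(nm)^{2-2H}$ to $\langle\phi_{n,t},\phi_{m,t}\rangle$, which diverges since $2-2H>0$. The bound that works is Cauchy--Schwarz applied to the rectangle integral itself, $a_{k,l}^2\leq (nm)^{-1}\iint_{R_{k,l}}|u-v|^{4H-4}du\,dv$, so that the near-diagonal part of $nm\sum_{k,l}a_{k,l}^2$ is dominated by $\iint_{|u-v|\leq\delta+1/n+1/m}|u-v|^{4H-4}du\,dv=O\big((\delta+1/n+1/m)^{4H-2}\big)$ --- which is precisely the integral estimate you quote, and which tends to $0$ as $\delta\to 0$ uniformly in large $n,m$ because $4H-2>0$. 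With that step spelled out, the truncation argument closes, the three quantities $\|\phi_{n,t}\|^2$, $\|\phi_{m,t}\|^2$ and $\langle\phi_{n,t},\phi_{m,t}\rangle$ share the common limit $H^2(2H-1)^2\iint_{[0,t]^2}|u-v|^{4H-4}du\,dv$, and the Cauchy property in $\HH^{\otimes 2}$ (hence the $L^2(\Omega)$ convergence) follows.
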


\begin{defi}\label{rosenblatt}
Let the assumption of Theorem \ref{taqqu} prevail
and denote by $R_t$ the limit of (\ref{ds}).
The process $R=\{R_t\}_{t\in [0,T]}$ is called the Rosenblatt process constructed from $\beta$.
\end{defi}

For the main properties of the Rosenblatt process $R$, we refer the reader to Taqqu \cite{rosenblatt} or Tudor \cite{tudor}. See also \cite[Section 7.3]{IN}.
An immediate corollary of Theorem \ref{taqqu} is as follows.
\begin{cor}\label{taqqu-cor}
Let $\beta=(\beta^{(1)},\beta^{(2)})$ be a two-dimensional fractional Brownian motion of index $H\in(\frac34,1)$. Then, for any $t\in [0,T]$,
\begin{eqnarray*}
n
\sum_{k=1}^{\lfloor n t\rfloor} \big[
(\beta^{(1)}_{k/n}-\beta^{(1)}_{(k-1)/n})^2-
(\beta^{(2)}_{k/n}-\beta^{(2)}_{(k-1)/n})^2\big]
\overset{L^2(\Omega)}{\longrightarrow}
R^{(1)}_t-R^{(2)}_t
\end{eqnarray*}
as $n\to\infty$, where $R^{(i)}$ is the Rosenblatt process constructed from
the fractional Brownian motion $\beta^{(i)}$, $i=1,2$.
\end{cor}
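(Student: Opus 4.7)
The plan is to deduce this corollary as an immediate consequence of Theorem \ref{taqqu} applied componentwise, by observing that the deterministic centering terms cancel out in the difference.

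First I would rewrite the expression in Theorem \ref{taqqu} in a more convenient form. Since $n^{1-2H}\cdot n^{2H}=n$, the theorem asserts that for a single fractional Brownian motion $\beta$ with $H\in(\tfrac34,1)$ and for every $t\in[0,T]$,
\begin{equation*}
n\sum_{k=1}^{\lfloor nt\rfloor}(\beta_{k/n}-\beta_{(k-1)/n})^2 - n^{1-2H}\lfloor nt\rfloor \overset{L^2(\Omega)}{\longrightarrow} R_t.
\end{equation*}
Applying this separately to $\beta^{(1)}$ (with limit $R^{(1)}_t$) and to $\beta^{(2)}$ (with limit $R^{(2)}_t$), and then subtracting the two relations, the identical centering terms $n^{1-2H}\lfloor nt\rfloor$ cancel. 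What remains on the left-hand side is exactly
\begin{equation*}
n\sum_{k=1}^{\lfloor nt\rfloor}\bigl[(\beta^{(1)}_{k/n}-\beta^{(1)}_{(k-1)/n})^2-(\beta^{(2)}_{k/n}-\beta^{(2)}_{(k-1)/n})^2\bigr],
\end{equation*}
while on the right-hand side we obtain $R^{(1)}_t-R^{(2)}_t$.

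Since $L^2(\Omega)$-convergence is preserved under finite linear combinations (by the triangle inequality applied to $\|\cdot\|_{L^2(\Omega)}$), the subtraction is legitimate regardless of whether $\beta^{(1)}$ and $\beta^{(2)}$ are independent or correlated, and regardless of the joint law of $R^{(1)}$ and $R^{(2)}$. There is in fact no genuine obstacle here: the only thing one needs is that Theorem \ref{taqqu} gives $L^2$-convergence (and not merely convergence in distribution), which is precisely the statement provided. I would conclude the proof in a single short paragraph along these lines.
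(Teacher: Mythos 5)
Your argument is correct and is precisely the intended one: the paper gives no written proof, calling the corollary ``immediate'' from Theorem \ref{taqqu}, and your componentwise application with cancellation of the common centering term $n^{1-2H}\lfloor nt\rfloor$ is exactly that immediate deduction. Your remark that the $L^2(\Omega)$-convergence (rather than mere convergence in law) is what makes the subtraction legitimate correctly identifies the key point the paper itself emphasizes when stating Theorem \ref{taqqu}.
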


\subsection{Two simple auxiliary lemmas}
To complete the proofs of Theorems \ref{maintheorem1} and \ref{maintheorem2} we will, among other things, need the following two simple lemmas.
\begin{lemma}\label{sigma1}
Let $B$ and $\sigma$ be as in Section \ref{framework}.
Then there exists a constant $C=C(\alpha,H,T,\sigma)>0$ such that, for any $i,j=1,2$, any  $n\geq 1$ and any $k\in\{1,...,\lfloor nT\rfloor\}$,
\begin{eqnarray}\label{claim1}
\left\|\int_{(k-1)/n}^{k/n}(\sigma ^{i,j}_s-\sigma ^{i,j}_{k/n})dB^j_s\right\|_{L^2(\Omega)}&\leq& Cn^{-2\alpha },\\
\label{claim2}
 \left\|\int_{(k-1)/n}^{k/n}\sigma ^{i,j}_sdB^j_s\right\|_{L^2(\Omega)}&\leq& C n^{-H}.
\end{eqnarray}
\end{lemma}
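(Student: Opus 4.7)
\medskip\noindent
\textbf{Proof plan.} The proof is a straightforward application of the Young estimate (\ref{young1}) combined with H\"older/GRR moment bounds. The only mild subtlety is that the anchor inside the parentheses in (\ref{claim1}) is $k/n$ rather than $(k-1)/n$, which is what the estimate (\ref{young1}) is set up to handle. I will treat (\ref{claim1}) first and then reduce (\ref{claim2}) to it.

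For (\ref{claim1}), I would add and subtract $\sigma^{i,j}_{(k-1)/n}$ to write
\begin{eqnarray*}
\int_{(k-1)/n}^{k/n}(\sigma ^{i,j}_s-\sigma ^{i,j}_{k/n})dB^{(j)}_s
&=&\int_{(k-1)/n}^{k/n}(\sigma ^{i,j}_s-\sigma ^{i,j}_{(k-1)/n})dB^{(j)}_s\\
&&+\,(\sigma ^{i,j}_{(k-1)/n}-\sigma ^{i,j}_{k/n})\bigl(B^{(j)}_{k/n}-B^{(j)}_{(k-1)/n}\bigr).
\end{eqnarray*}
Since assumption (\textbf{A}) gives $\alpha<H$, the paths of $B^{(j)}$ are almost surely in $C^\alpha$, and moreover $\alpha+\alpha>1$. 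Thus the Young estimate (\ref{young1}) (taking $f=\sigma^{i,j}$, $g=B^{(j)}$ and $\gamma=\alpha$) bounds the first piece by $C_{\alpha,\alpha}\,|\sigma^{i,j}|_\alpha |B^{(j)}|_\alpha n^{-2\alpha}$, while the H\"older bound applied coordinate by coordinate bounds the boundary term by $|\sigma^{i,j}|_\alpha |B^{(j)}|_\alpha n^{-2\alpha}$ as well. Taking the $L^2(\Omega)$ norm and using Cauchy--Schwarz, (\ref{claim1}) follows once we know that $\|\sigma^{i,j}\|_\alpha$ and $|B^{(j)}|_\alpha$ both have finite fourth moments: the former is assumption (\textbf{B}), and the latter follows from the GRR estimate (\ref{grr}) together with the Gaussian scaling $\esp|B^{(j)}_u-B^{(j)}_v|^q=c_q|u-v|^{qH}$ (pick $q$ large enough so that $q(H-\alpha)>1$ to make the double integral finite).

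For (\ref{claim2}), I would decompose
\[
\int_{(k-1)/n}^{k/n}\sigma ^{i,j}_sdB^{(j)}_s
=\sigma^{i,j}_{k/n}\bigl(B^{(j)}_{k/n}-B^{(j)}_{(k-1)/n}\bigr)
+\int_{(k-1)/n}^{k/n}(\sigma^{i,j}_s-\sigma^{i,j}_{k/n})dB^{(j)}_s.
\]
The second term is $O(n^{-2\alpha})$ in $L^2(\Omega)$ by (\ref{claim1}), and assumption (\textbf{A}) gives $2\alpha>\frac12+H\geq H$, so this is absorbed in $O(n^{-H})$. For the first term, H\"older's inequality gives
\[
\bigl\|\sigma^{i,j}_{k/n}\bigl(B^{(j)}_{k/n}-B^{(j)}_{(k-1)/n}\bigr)\bigr\|_{L^2(\Omega)}
\leq \|\sigma^{i,j}_{k/n}\|_{L^4(\Omega)}\,\bigl\|B^{(j)}_{k/n}-B^{(j)}_{(k-1)/n}\bigr\|_{L^4(\Omega)}.
\]
The first factor is uniformly bounded in $k,n$ by $\|\,|\sigma^{i,j}|_\infty\,\|_{L^4(\Omega)}\leq \|\|\sigma^{i,j}\|_\alpha\|_{L^4(\Omega)}$, which is finite by (\textbf{B}); the second factor equals $c\,n^{-H}$ by Gaussian scaling. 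Hence the first term is $O(n^{-H})$ and (\ref{claim2}) follows.

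\medskip
There is no real obstacle here: the main conceptual point is that (\textbf{A}) is exactly tuned so that $\alpha<H$ (to let us plug $B^{(j)}$ into the Young estimate with exponent $\alpha$) and $2\alpha\geq H$ (to let us absorb the Young remainder into the crude bound $n^{-H}$); assumption (\textbf{B}) and the GRR inequality supply the probabilistic moment control needed to pass from pathwise bounds to $L^2(\Omega)$ bounds.
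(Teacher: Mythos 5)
Your proof is correct and follows essentially the same route as the paper: apply the Young estimate (\ref{young1}) pathwise, pass to $L^2(\Omega)$ via Cauchy--Schwarz using ({\bf B}) and the GRR bound (\ref{grr}), and reduce (\ref{claim2}) to (\ref{claim1}) plus the boundary term $\sigma^{i,j}_{k/n}\Delta B^{(j)}_{k/n}$. The only difference is that you explicitly handle the mismatch between the left-endpoint anchor in (\ref{young1}) and the right-endpoint anchor $k/n$ in the lemma---a point the paper silently glosses over---which is a small gain in rigor rather than a different approach.
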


\begin{proof}
Without loss of generality, we may and will assume that $i=j=1$. Using (\ref{young1}) with $\beta=\alpha$, we have, almost surely,
\begin{eqnarray*}
\lln\int_{(k-1)/n}^{k/n}\lp \sigma ^{1,1}_s - \sigma ^{1,1}_{k/n}\rp dB^1_s\rrn&\leq& C_{\alpha,\alpha}|\sigma ^{1,1}|_{\alpha}
| B^1|_{\alpha} n^{-2\alpha}.
\end{eqnarray*}
Using Cauchy-Schwarz inequality, one deduces
\begin{eqnarray*}
&& E\lc\lp\int_{(k-1)/n}^{k/n}\lp \sigma ^{1,1}_s - \sigma ^{1,1}_{k/n}\rp dB^1_s\rp^2\rc\\
&\leq & C^2_{\alpha,\alpha}\sqrt{E\lc\|\sigma ^{1,1}\|_{\alpha}^4\rc}\sqrt{\lc E| B^1|_{\alpha}^4 \rc}\,n^{-4\alpha} =Cn^{-4\alpha},
\end{eqnarray*}
thus yielding (\ref{claim1}).
On the other hand, one has
\begin{eqnarray*}
&& \left\| \int_{(k-1)/n}^{k/n}\sigma ^{i,j}_sdB^j_s\right\|_{L^2(\Omega)}\\
 &\leq&  \left\|\int_{(k-1)/n}^{k/n}\lp \sigma ^{i,j}_s -
\sigma ^{i,j}_{k/n}\rp dB^j_s\right\|_{L^2(\Omega)}
\!\!\!\!+\bigg\|\sigma ^{i,j}_{k/n} \De B^j_{k/n}\bigg\|_{L^2(\Omega)}\\
&\leq& Cn^{-2\alpha}+Cn^{-2H},\quad \mbox{by (\ref{claim1}) and because of ({\bf B})}\\
&\leq&Cn^{-H},\quad\mbox{using ({\bf A})},
\end{eqnarray*}
which is the desired claim (\ref{claim2}).
\end{proof}
%%%%%%%%%%%%%%%%%%%%%%%%%%%%%%%%%%%%%%%%%%%%%%%%%%%%%LEMMA%%%%%%%%%%%%%%%%%%%%%%%%%%%%%%%%%%

\begin{lemma}\label{neuenkirch}
Let $g,h:[0,T]\to\R$ be two continuous functions, let $\gamma\in\R$, and let us write $\De h_{k/n}$ to denote the increment $h(k/n)-h((k-1)/n)$. If
\begin{equation}\label{hyp-lm}
\forall t\in[0,T]\cap \mathbb{Q}:\quad \lim_{n\to\infty}n^{\gamma}\sum_{k=1}^{\lfloor nT\rfloor}{\bf{1}}_{[0,t]}(k/n) \lp\De h_{k/n}\rp^2=t,
\end{equation}
then, for all $t\in [0,T]$,
\begin{eqnarray*}
\lim_{n\to\infty} n^{\gamma}\sum_{k=1}^{\lfloor nT\rfloor}g(k/n) {\bf{1}}_{[0,t]}\lp k/n\rp \lp\De h_{k/n}\rp^2=\iot g(s)ds.
\end{eqnarray*}
\end{lemma}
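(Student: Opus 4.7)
The plan is to view the statement as a weak-convergence-of-measures result: the sequence of (random, but here deterministic) measures
\[
\mu_n := n^\gamma\sum_{k=1}^{\lfloor nT\rfloor}(\Delta h_{k/n})^2\,\delta_{k/n}
\]
on $[0,T]$ should converge weakly to Lebesgue measure, and then one tests against the continuous function $g\,\mathbf{1}_{[0,t]}$. The hypothesis gives precisely the convergence of the distribution functions $F_n(t):=\mu_n([0,t])$ on the dense set $[0,T]\cap\mathbb{Q}$ to the continuous limit $t\mapsto t$.

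First I would upgrade (\ref{hyp-lm}) from rationals to all $t\in[0,T]$. Since $t\mapsto F_n(t)$ is nondecreasing, for any $t$ and any rationals $q_1<t<q_2$ we have $F_n(q_1)\leq F_n(t)\leq F_n(q_2)$; letting $n\to\infty$ gives $q_1\leq \liminf_n F_n(t)\leq\limsup_n F_n(t)\leq q_2$, and letting $q_1\uparrow t$, $q_2\downarrow t$ yields $F_n(t)\to t$ for every $t\in[0,T]$. In particular $F_n(T)\to T$, so $\sup_n F_n(T)<\infty$, and this uniform mass bound will control error terms.

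Next I would approximate the continuous function $g$ by step functions. Fix $\varepsilon>0$; by uniform continuity of $g$ on $[0,T]$ there is a partition $0=t_0<t_1<\cdots <t_N=T$ with $\sup_{s\in(t_{j-1},t_j]}|g(s)-g(t_{j-1})|<\varepsilon$ for each $j$. Set $g_N(s):=\sum_{j=1}^N g(t_{j-1})\mathbf{1}_{(t_{j-1},t_j]}(s)$, so $\|g-g_N\|_\infty\leq \varepsilon$. For the step function $g_N$, the sum to be studied is the finite linear combination
\[
n^\gamma\sum_{k=1}^{\lfloor nT\rfloor}g_N(k/n)\mathbf{1}_{[0,t]}(k/n)(\Delta h_{k/n})^2 = \sum_{j=1}^N g(t_{j-1})\bigl[F_n(t_j\wedge t)-F_n(t_{j-1}\wedge t)\bigr],
\]
which by the previous paragraph converges as $n\to\infty$ to $\sum_{j=1}^N g(t_{j-1})\bigl[(t_j\wedge t)-(t_{j-1}\wedge t)\bigr]=\int_0^t g_N(s)\,ds$.

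Finally I would compare the original sum with its $g_N$-counterpart: the difference is bounded in absolute value by $\varepsilon\,F_n(T)$, which is at most $\varepsilon\,(T+1)$ for $n$ large. Combining this with $\bigl|\int_0^t g_N(s)ds-\int_0^t g(s)ds\bigr|\leq \varepsilon T$, we obtain
\[
\limsup_{n\to\infty}\Bigl| n^\gamma\sum_{k=1}^{\lfloor nT\rfloor}g(k/n)\mathbf{1}_{[0,t]}(k/n)(\Delta h_{k/n})^2-\int_0^t g(s)\,ds\Bigr|\leq \varepsilon(2T+1).
\]
Letting $\varepsilon\downarrow 0$ gives the claim. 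There is essentially no hard step here: the only mild subtlety is the monotonicity trick to pass from rational $t$ to arbitrary $t$, which works cleanly precisely because the limit $t\mapsto t$ is continuous.
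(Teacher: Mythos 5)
Your argument is correct and follows essentially the same route as the paper's own proof: use monotonicity of $t\mapsto F_n(t)$ to upgrade the hypothesis from rational to all $t$, reinterpret this as pointwise convergence of the distribution functions of the measures $\nu_n=n^{\gamma}\sum_k(\De h_{k/n})^2\delta_{k/n}$ to that of Lebesgue measure, and then integrate the continuous test function $g\,{\bf 1}_{[0,t]}$ against this weak limit. The only difference is that you spell out, via the step-function approximation, the final step that the paper dismisses as ``a routine exercise''.
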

\begin{proof}
Since $t\mapsto n^{\gamma}\sum_{k=1}^{n}{\bf{1}}_{[0,t]}(k/n) \lp\De h_{k/n}\rp^2$
is non-decreasing, it is straightforward to deduce from (\ref{hyp-lm}) that, for {\it all} $t\in[0,T]$,
\[
\lim_{n\to\infty}n^{\gamma}\sum_{k=1}^{\lfloor nT\rfloor}{\bf{1}}_{[0,t]}(k/n) \lp\De h_{k/n}\rp^2=t.
\]
Otherwise stated, the cumulative distribution function (cdf) of the compactly supported
measure
\[
\nu_n(dx)=n^{\gamma}\sum_{k=1}^{\lfloor nT\rfloor} \lp\De h_{k/n}\rp^2\delta_{k/n}(dx),
\]
where $\delta_a$ stands for the Dirac mass at $a$,
converges pointwise to the cdf of the Lebesgue measure on $[0,T]$. Since $g$ is continuous, it is then a routine exercise to deduce that our desired claim holds true.
\end{proof}
%%%%%%%%%%%%%%%%%%%%%%%%%%%%%%%%%%%%%%%%%%%%%%%%%%%%%%%%%%%%%%%
\subsection{Asymptotic behaviour of weighted random sums, following Corcuera, Nualart and Podolskij \cite{CNP}}\label{sec-CNP}

The following result represents a central ingredient in the proof of both Theorems \ref{maintheorem1} and \ref{maintheorem2}.

\begin{prop}\label{auxiliary}
Let $u=\{u_t\}_{t\in [0,T]}$ be a H\"older continuous process with index $\alpha>\frac12$, set
\[
K_n(t)=\sum_{k=1}^{\lfloor nt\rfloor} u_{k/n}\De B^{(1)}_{k/n}
\De B^{(2)}_{k/n},\quad t\in [0,T],
\]
and let $a_n$ be given by (\ref{an2}).
Then, as $n\to\infty$,
\begin{equation}\label{vf}
a_n\, K_n \overset{\cal L}{\to}\mbox{  } \int_0^\cdot u_s dZ_s\quad\mbox{in the Skorohod space $D[0,T]$}.
\end{equation}
Here, $Z$ is as in the statement of Theorem \ref{maintheorem2}.
\end{prop}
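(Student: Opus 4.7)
The plan is to reduce the mixed product $K_n$ to a weighted sum of squared fractional increments by a polarization trick, read off the unweighted limit from Corollaries \ref{BM-cor} and \ref{taqqu-cor}, and then apply the weighted-sums theorem of \cite{CNP} to pull the Hölder factor $u$ back inside. For Step 1, setting $\beta^{(k)} := \tfrac{1}{\sqrt{2}}\big(B^{(1)} + (-1)^{k+1}B^{(2)}\big)$ ($k=1,2$) as in Theorem \ref{maintheorem2}, the Gaussian pair $(\beta^{(1)},\beta^{(2)})$ is a two-dimensional fractional Brownian motion of index $H$ with \emph{independent} components, and the elementary polarization identity $4ab = (a+b)^2 - (a-b)^2$ immediately yields
\[
\De B^{(1)}_{k/n}\,\De B^{(2)}_{k/n} = \tfrac12\big[(\De \beta^{(1)}_{k/n})^2 - (\De \beta^{(2)}_{k/n})^2\big].
\]
Consequently $K_n(t) = \tfrac12 \sum_{k=1}^{\lfloor nt\rfloor} u_{k/n}\, Y^{(n)}_k$ with $Y^{(n)}_k := (\De \beta^{(1)}_{k/n})^2 - (\De \beta^{(2)}_{k/n})^2$.

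In Step 2, the partial sums $S_n(t) := \sum_{k=1}^{\lfloor nt\rfloor} Y^{(n)}_k$ are precisely those whose scaling limits are identified in Corollaries \ref{BM-cor} and \ref{taqqu-cor}: for $\tfrac12 < H \leq \tfrac34$ one obtains the joint fdd convergence $(F, a_n S_n) \to (F, 2Z)$ for every $\mathcal{F}$-measurable random vector $F$, while for $H > \tfrac34$ one obtains the $L^2(\Omega)$ convergence $a_n S_n(t) \to 2Z_t$, with $Z$ in both regimes being the process described in Theorem \ref{maintheorem2}.

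For Step 3, the core step, I would invoke the main theorem of \cite{CNP} to transfer the Hölder weight $u$ inside the limit. The input required by that theorem --- a sufficiently strong convergence of $a_n S_n$ together with a Hölder integrand of index $\alpha > \tfrac12$ --- is exactly what Step 2 and the hypothesis of the proposition provide. Its output is the functional convergence
\[
a_n K_n \overset{\mathcal{L}}{\longrightarrow} \int_0^\cdot u_s\,dZ_s \quad \text{in } D[0,T],
\]
with the integral interpreted as a genuine Young integral when $H > \tfrac34$ (the Rosenblatt process having Hölder exponent strictly greater than $\tfrac12$) and as a Wiener integral against the $\mathcal{F}$-independent Brownian motion $W$ when $H \leq \tfrac34$. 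The main obstacle is verifying the hypotheses of \cite{CNP} in each regime. For $H > \tfrac34$ this is largely automatic thanks to the $L^2$-mode convergence of Corollary \ref{taqqu-cor}, which is precisely why that stronger mode is singled out as crucial in the introduction. For $H \leq \tfrac34$ one must combine the joint stable convergence of Corollary \ref{BM-cor} with tightness of $a_n K_n$ in $D[0,T]$; the latter should follow from Kolmogorov's criterion, whose higher-moment bounds are accessible because each centred variable $Y^{(n)}_k$ lives in the second Wiener chaos of $(\beta^{(1)},\beta^{(2)})$, where Gaussian hypercontractivity promotes $L^2$ covariance estimates to $L^p$ bounds for all $p \geq 2$.
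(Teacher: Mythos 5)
Your proposal follows essentially the same route as the paper: the rotation/polarization trick reducing $\De B^{(1)}_{k/n}\De B^{(2)}_{k/n}$ to $\frac12\big[(\De\beta^{(1)}_{k/n})^2-(\De\beta^{(2)}_{k/n})^2\big]$ with independent components, Corollaries \ref{BM-cor} and \ref{taqqu-cor} for the stable (resp.\ $L^2$) convergence of the unweighted sums, and Theorem \ref{thm-CNP} with hypercontractivity in the second Wiener chaos to reduce the fourth-moment hypothesis {\bf (H2)} to a covariance bound. The only piece left implicit is the actual second-moment computation, namely $E\big[(\sum_{k=i+1}^j\xi_{k,n})^2\big]=a_n^2 n^{-4H}\sum_{k,k'}\rho(k-k')^2$ and the verification that $a_n^2 n^{1-4H}\sum_{|r|\leq nT}\rho(r)^2=O(1)$ in each of the three regimes for $a_n$, which is exactly what justifies the normalizations in (\ref{an2}).
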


The proof of our Proposition \ref{auxiliary} heavily relies on
a nice result taken from Corcuera, Nualart and Podolskij \cite{CNP}. Actually, we will need a slight extension of the result of \cite{CNP}, that we state here for convenience (and also because we do not share the same notation). The only difference between Theorem \ref{thm-CNP} as stated below and its original version appearing in \cite{CNP} is that $Z$ need not be a Brownian motion. A careful inspection of the proof given in \cite{CNP} indeed reveals that the Brownian feature of $Z$ plays actually no role; the only property of $Z$ which is used is that the sum of its H\"older exponent and that of $u$ is strictly bigger than 1, see ${\bf (H1)}$.

\begin{theorem}[Corcuera, Nualart, Podolskij]\label{thm-CNP}
The underlying probability space is $(\Omega,\mathcal{F},P)$. Let $u=\{u_t\}_{t\in [0,T]}$ be a H\"older continuous process with index $\alpha\in(0,1)$, and let $\xi=\{\xi_{k,n}\}_{n\in\mathbb{N},\,1\leq k\leq \lfloor nT\rfloor}$ be a family of random variables. Set \[
g_n(t)=\sum_{k=1}^{\lfloor nt\rfloor}\xi_{k,n},\quad t\in [0,T].
\]
Assume the following two hypotheses on the double sequence $\xi$:
\begin{itemize}
\item[{\bf (H1)}] $\{g_n(t)\}_{t\in [0,T]}\overset{\rm f.d.d.}{\to}\{Z(t)\}_{t\in [0,T]}$ $\mathcal{F}$-stably, where $Z$ is
H\"older continuous with index $\beta$ such that $\alpha+\beta>1$.
\item[{\bf (H2)}] There is a constant $C>0$ such that, for any $1\leq i<j\leq [nT]$,
\[
E\left[\left(\sum_{k=i+1}^j\xi_{k,n}\right)^4\right]\leq C\left(\frac{j-i}{n}\right)^2.
\]
\end{itemize}
Then
\[
\sum_{k=1}^{\lfloor n\cdot\rfloor} u_{\frac{k}n}\,\xi_{k,n} \overset{\cal L}{\to}\int_0^\cdot u_s dZ_s
\quad\mbox{in the Skorohod space $D[0,T]$},
\]
where $\int_0^\cdot u_s dZ_s$ is understood as a Young integral.
\end{theorem}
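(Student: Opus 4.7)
The plan is to reproduce the argument of \cite{CNP} and verify that every step goes through under the sole requirement $\alpha+\beta>1$ on the H\"older exponents, without invoking the martingale or Gaussian structure of Brownian motion. The proof decomposes into three stages: (i) finite-dimensional stable convergence of $S_n(t):=\sum_{k=1}^{\lfloor nt\rfloor}u_{k/n}\xi_{k,n}$; (ii) tightness of $\{S_n\}$ in $D[0,T]$; (iii) identification of the limit as the Young integral $\int_0^\cdot u_s\,dZ_s$.

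\textbf{Finite-dimensional convergence and identification.} The main tool is the discrete integration-by-parts formula (Abel summation):
\[
S_n(t)=u_{\lfloor nt\rfloor/n}\,g_n(t)-\sum_{k=0}^{\lfloor nt\rfloor-1}(u_{(k+1)/n}-u_{k/n})\,g_n(k/n).
\]
The second term is exactly the Riemann--Stieltjes integral $\int_0^t g_n(s)\,du_s$, well-defined because $g_n$ is a piecewise-constant c\`adl\`ag step function and $u$ is continuous. Since $u$ is $\mathcal{F}$-measurable, the $\mathcal{F}$-stability in (H1) upgrades to joint f.d.d.\ convergence $(u,g_n)\Rightarrow(u,Z)$, so the first term converges in law to $u_tZ_t$. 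For the second, one compares it to a Riemann sum of $Z$ against $u$: the Young estimate (\ref{young1}) controls the mesh error $|\int_0^t Z_s\,du_s-\sum_k Z_{k/n}(u_{(k+1)/n}-u_{k/n})|\to 0$, and joint f.d.d.\ convergence then passes to the limit in the Riemann sum, yielding $\int_0^t g_n(s)\,du_s\to\int_0^t Z_s\,du_s$ in law. Young's integration-by-parts formula
\[
u_tZ_t=\int_0^t u_s\,dZ_s+\int_0^t Z_s\,du_s,
\]
valid whenever $\alpha+\beta>1$, then identifies the limit of $S_n(t)$ as $\int_0^t u_s\,dZ_s$.

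\textbf{Tightness.} For $0\leq s<t\leq T$ and $k_0:=\lfloor ns\rfloor+1$, split
\[
S_n(t)-S_n(s)=u_{k_0/n}\sum_{k=k_0}^{\lfloor nt\rfloor}\xi_{k,n}+\sum_{k=k_0}^{\lfloor nt\rfloor}(u_{k/n}-u_{k_0/n})\xi_{k,n}.
\]
Assumption (H2) bounds $E[(\sum_{k=k_0}^{\lfloor nt\rfloor}\xi_{k,n})^4]\leq C(t-s)^2$, which combined with moments of $\|u\|_\infty$ (finite since $u$ is H\"older continuous) controls the first piece in $L^4$ by $C(t-s)^{1/2}$. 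A further Abel summation on the second piece expresses it in terms of partial sums of $\xi$ weighted by increments of $u$ of size $\leq\|u\|_\alpha((k-k_0)/n)^\alpha$; combining (H2) with Cauchy--Schwarz yields an $L^4$ bound of order $(t-s)^{1/2+\alpha}$. Hence $E[|S_n(t)-S_n(s)|^4]\leq C(t-s)^2$, and Kolmogorov's criterion delivers tightness in $D[0,T]$.

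\textbf{Main obstacle.} The most delicate step is justifying $\int_0^t g_n(s)\,du_s\to\int_0^t Z_s\,du_s$ in law: only f.d.d.\ stability of $g_n\to Z$ is supplied by (H1), not convergence in a function-space topology. To close this gap one combines (H2), via a Garsia--Rodemich--Rumsey-type estimate in the spirit of (\ref{grr}), with the continuity of the Young integral in its driver, so that weak joint convergence in a suitable H\"older topology (compatible with (\ref{young1})) can be invoked. It is precisely at this juncture that one verifies, as in \cite{CNP}, that no Brownian structure of $Z$ is used: only the regularity $\alpha+\beta>1$ feeds into (\ref{young1}), after which the identification and tightness arguments above conclude the proof.
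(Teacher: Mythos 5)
First, a point of comparison: the paper does not actually prove Theorem \ref{thm-CNP}. It is quoted from \cite{CNP}, and the authors' entire justification is the remark preceding the statement, namely that a careful inspection of the proof in \cite{CNP} shows the Brownian nature of $Z$ is never used, the only relevant property being that the H\"older exponents of $u$ and $Z$ sum to more than $1$. Your reconstruction of the argument must therefore be judged on its own merits, and it contains a genuine gap, located exactly where you place your ``main obstacle''.

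The problematic step is the convergence of $\sum_{k}(u_{(k+1)/n}-u_{k/n})\,g_n(k/n)$ to $\int_0^t Z_s\,du_s$. This sum evaluates $g_n$ at $\lfloor nt\rfloor$ points, a number growing with $n$, so the finite-dimensional stable convergence supplied by {\bf (H1)} gives nothing directly, as you note. But your proposed repair, upgrading to weak convergence in a H\"older topology via a Garsia--Rodemich--Rumsey estimate fed by {\bf (H2)}, does not close the gap: {\bf (H2)} is a fourth-moment bound of order $((j-i)/n)^{2}$, so Kolmogorov/GRR yields H\"older control on $g_n$ only with exponent $\gamma<\frac14$, while continuity of the Young integral $\int g\,du$ in the driver $g$ requires $\alpha+\gamma>1$, i.e.\ $\alpha>\frac34$. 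That is strictly stronger than the standing hypothesis $\alpha+\beta>1$ and fails precisely in the regime where the theorem is used in this paper (Proposition \ref{auxiliary} assumes only $\alpha>\frac12$, and assumption {\bf (A)} forces $\alpha<H\leq\frac34$ in the Breuer--Major regime). The correct route, which is the one taken in \cite{CNP}, is to freeze $u$ on a coarse partition $0=t_0<\cdots<t_m=t$ of \emph{fixed} mesh: write $S_n(t)$ as $\sum_i u_{t_i}\bigl(g_n(t_{i+1})-g_n(t_i)\bigr)$ plus an error controlled uniformly in $n$ by {\bf (H2)} and the $\alpha$-H\"older continuity of $u$ (via an Abel transform inside each block); let $n\to\infty$ in the main term using stable f.d.d.\ convergence at the finitely many points $t_i$; and only then let the mesh tend to zero, where the Riemann sums $\sum_i u_{t_i}(Z_{t_{i+1}}-Z_{t_i})$ converge to the Young integral because it is the \emph{limit} $Z$ that is $\beta$-H\"older with $\alpha+\beta>1$. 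In short, the Riemann-sum-to-integral passage must be performed on $Z$, where the needed regularity is available, not on the prelimit $g_n$, where it is not (indeed $g_n$ is a step function, so it is not H\"older continuous in any literal sense).

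Two secondary flaws in the tightness step: path H\"older continuity of $u$ does not imply that $\|u\|_\infty$ has finite moments, and even granting moments you cannot factor $E\bigl[|u_{k_0/n}|^4\,|\sum_k\xi_{k,n}|^4\bigr]$ into a product of moments, since $u$ and $\xi$ need not be independent and {\bf (H2)} provides no moment beyond the fourth. Both issues are standardly resolved by localizing on the event $\{\|u\|_\alpha\leq N\}$, a step your sketch should make explicit.
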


Armed with Theorem \ref{thm-CNP}, we are now ready to prove Proposition \ref{auxiliary}.

\begin{proof}[Proof of Proposition \ref{auxiliary}]
Set $\xi_{k,n}=a_n\De B^{(1)}_{k/n}
\De B^{(2)}_{k/n}$ and
$g_n(t)=\sum_{k=1}^{\lfloor nt\rfloor}\xi_{k,n}$, $t\in [0,T]$.
We shall check the two assumptions ${\bf (H1)}$ and
${\bf (H2)}$ of  Theorem \ref{thm-CNP}.\\

\underline{\it Step 1: Checking $(\bf{H1})$}. We make use of the rotation trick. More precisely, 
let
 $\beta^{(1)}=\frac{1}{\sqrt{2}}(B^{(1)}+B^{(2)})$ and $\beta^{(2)}=\frac{1}{\sqrt{2}}(B^{(1)}-B^{(2)})$,
 so that $\xi_{k,n}=\frac{a_n}2\left(\big(\Delta\beta^{(1)}_{k/n}\big)^2-\big(\Delta\beta^{(2)}_{k/n}\big)^2\right)$.
 It is easy to check that $\beta^{(1)}$ and $\beta^{(2)}$ are
 two independent fractional Brownian motions of index $H$.
 As a result, assumption $(\bf{H1})$ is satisfied thanks to 
Corollary \ref{BM-cor} (resp. Corollary \ref{taqqu-cor}) when $H\leq\frac34$ (resp.  $H>\frac34$).\\

\underline{\it Step 2: Checking $(\bf{H2})$}. Since all the $L^p(\Omega)$-norms are equivalent inside a given Wiener chaos (here: the second Wiener chaos), it suffices to check the existence of a constant $C>0$ such that, for any $1\leq i<j\leq [nT]$,
\begin{equation}\label{H2-hyp}
E\left[\left(\sum_{k=i+1}^j\xi_{k,n}\right)^2\right]\leq C\,\frac{j-i}{n}.
\end{equation}
Using the independence of $B^{(1)}$ and $B^{(2)}$, one computes that
\[
E\left[\left(\sum_{k=i+1}^j\xi_{k,n}\right)^2\right]=a_n^2\,n^{-4H}\sum_{k,k'=i+1}^j \rho(k-k')^2,
\]
with $\rho(r)=\frac12\big(|r+1|^{2H}+|r-1|^{2H}-2|r|^{2H}\big)$.
As a result, for any $1\leq i<j\leq [nT]$,
\[
E\left[\left(\sum_{k=i+1}^j\xi_{k,n}\right)^2\right]\leq a_n^2\,n^{-4H}(j-i)\sum_{r=-[nT]}^{[nT]} \rho(r)^2.
\]
It is straightforward to show that $a_n^2\,n^{1-4H}\sum_{r=-[nT]}^{[nT]} \rho(r)^2=O(1)$ as $n\to\infty$.
Thus, (\ref{H2-hyp}) is satisfied, and so is $(\bf{H2})$.\\

To conclude the proof of Proposition \ref{auxiliary}, it remains to apply Theorem  \ref{thm-CNP} with $\xi_{k,n}=a_n\De B^{(1)}_{k/n}
\De B^{(2)}_{k/n}$.
\end{proof}
%%%%%%%%%%%%%%%%%%%%%%%%%%%%%%%%%%%%%%%%%%%%%%%%%%%%%%%%%%%%%%%%%%%%%%%%%%%%%%%%%%%%%%%%%%%%%%
\section{Proof of our main results}\label{proof}

\subsection{Proof of Theorem \ref{maintheorem1}}
We divide it into several steps.

\bigskip
%%%%%%%%%%%%%%%%%%%%%%%%%%%%%%%%%%%%%%%%%%%%%%%%%%%%%%%%%%%%%%%%%%%%%%%%%%%%%%%%%%%%%%%%%%%
\underline{\it Step 1}. Recall $J_n$ from (\ref{JN}). One can write

\begin{eqnarray*}
J_n(t)&=&\sum_{k=1}^{\lfloor n t\rfloor} \lp\int_{(k-1)/n}^{k/n}\sigma ^{1,1}_s dB^1_s + \int_{(k-1)/n}^{k/n}\sigma ^{1,2}_s dB^2_s\rp\\
& &\hskip2cm\times \lp\int_{(k-1)/n}^{k/n}\sigma ^{2,1}_s dB^1_s + \int_{(k-1)/n}^{k/n}\sigma ^{2,2}_s dB^2_s\rp\\
&=:& A_{n}(t) + R_{1,n}(t) + R_{2,n}(t),
\end{eqnarray*}
with
\begin{eqnarray}
A_{n}(t)&=&\sum_{k=1}^{\lfloor n t\rfloor} \lp\sigma ^{1,1}_{k/n} \De B^1_{k/n} +\sigma ^{1,2}_{k/n} \De B^2_{k/n}\rp
\lp\sigma ^{2,1}_{k/n} \De B^1_{k/n} +\sigma ^{2,2}_{k/n} \De B^2_{k/n}\rp,\notag\\
\label{an}\\
R_{1,n}(t)&=&\sum_{k=1}^{\lfloor n t\rfloor} \lp \int_{(k-1)/n}^{k/n} \sigma ^{1,1}_s dB^1_s +\int_{(k-1)/n}^{k/n} \sigma ^{1,2}_s  dB^2_s  \rp\label{r1n}\\
&&\hskip1cm\times
 \lp \int_{(k-1)/n}^{k/n}\lp \sigma ^{2,1}_s - \sigma ^{2,1}_{k/n}\rp dB^1_s +
\int_{(k-1)/n}^{k/n}\lp \sigma ^{2,2}_s - \sigma ^{2,2}_{k/n}\rp dB^2_s  \rp,\notag\\
R_{2,n}(t)&=&\sum_{k=1}^{\lfloor n t\rfloor} \lp\sigma ^{2,1}_{k/n} \De B^1_{k/n} +\sigma ^{2,2}_{k/n} \De B^2_{k/n}\rp \label{r2n}\\
& &\hskip1cm\times\lp \int_{(k-1)/n}^{k/n}\lp \sigma ^{1,1}_s - \sigma ^{1,1}_{k/n}\rp dB^1_s +
\int_{(k-1)/n}^{k/n}\lp \sigma ^{1,2}_s - \sigma ^{1,2}_{k/n}\rp dB^2_s  \rp.\notag
\end{eqnarray}

%%%%%%%%%%%%%%%%%%%%%%%%%%%%%%%%%%%%%%%%%%%%%%%%%%%%%%%%%%%%%%%%%%%%%%%%%%%%%%%%%%%%%%%%%%%
\underline{\it Step 2}.
Let us prove the convergence of $n^{2H-1}R_{i,n}(t)$, $i=1,2$, $t\in [0,T]$, in $L^1(\Omega)$ towards zero.
Using Cauchy-Schwarz and Lemma \ref{sigma1}, we see that
\begin{eqnarray*}
\|R_{1,n}(t)\|_{L^1(\Omega)}&\leq&\sum_{k=1}^{\lfloor n t\rfloor} \left\|  \int_{(k-1)/n}^{k/n}\sigma ^{1,1}_s dB^1_s +
\int_{(k-1)/n}^{k/n}\sigma ^{1,2}_s dB^2_s\right\|_{L^2(\Omega)}\\
&\times&\left\| \int_{(k-1)/n}^{k/n}\lp \sigma ^{2,1}_s - \sigma ^{2,1}_{k/n}\rp dB^1_s
+\int_{(k-1)/n}^{k/n}\lp \sigma ^{2,2}_s - \sigma ^{2,2}_{k/n}\rp dB^2_s  \right\|_{L^2(\Omega)}\\
&\leq&C n^{-(H+2\alpha-1)}.
\end{eqnarray*}
Thanks to our assumption ({\bf A}), one deduces that $n^{2H-1}\|R_{1,n}(t)\|_{L^1(\Omega)}\rightarrow0$ as $n\rightarrow\infty$.
Similarly, one proves that  $n^{2H-1}\|R_{2,n}(t)\|_{L^1(\Omega)}\rightarrow0$.\\

%%%%%%%%%%%%%%%%%%%%%%%%%%%%%%%%%%%%%%%%%%%%%%%%%%%%%%%%%%%%%%%%%%%%%%%%%%%%%%%%%%%%%%%%%%%
\underline{\it Step 3}. Let us now consider $A_{n}$. One has
\begin{eqnarray*}
A_{n}(t)&=&\sum_{k=1}^{\lfloor n t\rfloor} \lp\sigma ^{1,1}_{k/n} \De B^1_{k/n} +\sigma ^{1,2}_{k/n} \De B^2_{k/n}\rp
\lp\sigma ^{2,1}_{k/n} \De B^1_{k/n} +\sigma ^{2,2}_{k/n} \De B^2_{k/n}\rp\\
&=:& A_{1,n}(t)+A_{2,n}(t)+S_{n}(t),
\end{eqnarray*}
with
\begin{eqnarray}
A_{i,n}(t)&=&\sum_{k=1}^{\lfloor n t\rfloor} \sigma ^{1,i}_{k/n} \sigma ^{2,i}_{k/n}\lp\De B^i_{k/n}\rp^2,\quad i=1,2,\label{ain}\\
S_{n}(t)&=&\sum_{k=1}^{\lfloor n t\rfloor} \lp\sigma ^{1,1}_{k/n} \sigma ^{2,2}_{k/n}+\sigma ^{1,2}_{k/n} \sigma ^{2,1}_{k/n}\rp\De B^1_{k/n}
\De B^2_{k/n}.\label{r3n}
\end{eqnarray}
Using Proposition \ref{auxiliary} and whatever the value of $H$ compared to $\frac{3}{4}$, one immediately checks that
$n^{2H-1}S_{n}(t)$ converges in law to zero, thus in probability.
On the other hand, fix $i\in\{1,2\}$ and recall the well-known fact that, for any $t\in[0,T]$,
\[
\lim_{n\to\infty}n^{2H-1}\sum_{k=1}^{\lfloor n T\rfloor}{\bf{1}}_{[0,t]}(k/n) \lp\De B^i_{k/n}\rp^2=t\quad\mbox{almost surely}.
\]
We then deduce that,  with probability 1, assumption (\ref{hyp-lm}) holds true with $h=B^i$ and $\gamma=2H-1$.
Lemma \ref{neuenkirch} applies and yields that
\begin{eqnarray*}
n^{2H-1}A_{i,n}(t)\rightarrow
\int_0^t\sigma ^{1,i}_s\sigma ^{2,i}_s ds\quad\mbox{almost surely}.
\end{eqnarray*}

\underline{\it Step 4}. Plugging together the conclusions of Steps 1 to 3 completes the proof of Theorem \ref{maintheorem1}.\qed

\subsection{Proof of Theorem \ref{maintheorem2}}
Recall from the previous section that
$J_n=A_{1,n}+A_{2,n}+S_n+R_{1,n}+R_{2,n}$, with
$A_{i,n}$, $S_n$, $R_{1,n}$ and $R_{2,n}$ given by (\ref{ain}), (\ref{r3n}), (\ref{r1n}) and (\ref{r2n}) respectively.
Using the estimates of Step 2 in the previous section, we easily obtain that, under ({\bf A}),
$a_n\, R_{i,n}(t)$
tends to zero in $L^1(\Omega)$ as $n\to\infty$, $i=1,2$, $t\in [0,T]$.
Moreover, the quantities $A_{1,n}$ and $A_{2,n}$
given by (\ref{ain})
equal zero when $\sigma^{1,2}=\sigma^{2,1}=0$. As a result, the asymptotic behavior of
$a_n\, J_{n}$
is the same as that of
$a_n\,S_{n}$,
and the desired conclusion follows directly from Proposition \ref{auxiliary}.
\qed

\bigskip

\bigskip
\noindent
{\bf Acknowledgments}. We thank David Nualart and Mark Podolskij for helpful discussions about reference \cite{CNP}.
We also thank an anonymous referee for his/her careful reading and valuable suggestions.
%%%%%%%%%%%%%%%%%%%%%%%%%%%%%%%%%%%%%%%%%%%%%%%%%%%%%%%%%%%%BIBLIOGRAPHY%%%%%%%%%%%%%%%%%%%

\end{document}